\documentclass[11pt]{amsart}

\usepackage{amsmath,amssymb,mathtools}
\usepackage{booktabs}
\usepackage[colorlinks=true,citecolor=blue,linkcolor=blue,urlcolor=blue]{hyperref}
\usepackage{mathtools}

\newtheorem{theorem}{Theorem}[section]
\newtheorem{proposition}[theorem]{Proposition}
\newtheorem{lemma}[theorem]{Lemma}

\theoremstyle{remark}
\newtheorem{remark}[theorem]{Remark}

\newcommand{\D}{\mathbb D}

\DeclarePairedDelimiter{\abs}{\lvert}{\rvert}
\DeclarePairedDelimiter{\norm}{\lVert}{\rVert}

\title[Moment duality and Korenblum's constant]
{Moment duality and an improved lower bound for Korenblum's constant}

\author{Frank Wikström}
\address{Centre for Mathematical Sciences, Lund University,
Box 118, SE-221 00 Lund, Sweden}
\email{frank.wikstrom@math.lth.se}

\subjclass[2020]{Primary 30H20; Secondary 30C80, 65G30}
\keywords{Bergman space, Korenblum's maximum principle,
Korenblum's constant, moment inequalities, interval arithmetic}

\begin{document}

\begin{abstract}
We introduce a moment-duality method for Korenblum's maximum principle
in the Bergman space $A^2(\D)$.  Starting from an annular coefficient
estimate of Wang, we show that admissibility of a constant~$c$ follows
from the existence of a probability measure on $[c^2,1]$ whose ordinary
and weighted moments lie on opposite sides of the Bergman moments
$1/(k+1)$.  This converts the norm comparison into a positive moment
problem.  We then give an explicit measure, consisting of eight atoms
with rational data and Lebesgue measure on a terminal interval, for
which the required inequalities admit a rigorous ball-arithmetic
certificate.  Consequently,
\[
    c_2\geq 0.4263,
\]
improving Wang's recent lower bound $c_2\geq0.3554$.
\end{abstract}

\maketitle

\section{Introduction}

Let $\D$ be the open unit disk, and let $dA(z)=\pi^{-1}\,dx\,dy$
denote normalized area measure.  The Bergman space $A^2(\D)$ consists
of all holomorphic functions~$f$ on $\D$ such that
\[
    \norm{f}_{A^2}^2 =\int_{\D} \abs{f(z)}^2\,dA(z) <\infty.
\]
Korenblum's maximum principle asserts that there exists an absolute
constant $c\in(0,1)$ with the following property: if $f,g\in A^2(\D)$
and
\[
    \abs{f(z)} \leq \abs{g(z)},\qquad \text{for $c < \abs{z} < 1$},
\]
then $\norm{f}_{A^2}\leq \norm{g}_{A^2}$.  The supremum of
the admissible constants $c$ is denoted by $c_2$ and is called
Korenblum's constant.

The maximum principle was conjectured by Korenblum \cite{Korenblum1991} and
proved by Hayman~\cite{Hayman1999}, who showed that $c_2 > 0.04$.  Successive
improved lower estimates were obtained by Hinkkanen~\cite{Hinkkanen1999},
Schuster~\cite{Schuster2006}, and Wang~\cite{Wang2006,Wang2011}. Most recently, Wang
proved
\[
    c_2\geq 0.3554
\]
by combining a coefficient decomposition with the Möbius
pseudodistance of an annulus~\cite{Wang2025}.  The best known upper
bound is $c_2 < 0.6778994$, due to Wang~\cite{Wang2008}.

The main contribution of this paper is a dual moment criterion for
admissibility.  Wang's coefficient estimate yields a pointwise
inequality between two power series with nonnegative coefficients.  A
probability measure whose moments straddle the Bergman moments then
converts that pointwise inequality directly into the desired norm
comparison.  The criterion is independent of the particular measure
used below and reduces the search for lower bounds to a positive moment
problem.

As a quantitative consequence, we obtain the following improvement.

\begin{theorem}\label{thm:main}
Korenblum's constant satisfies
\[
    c_2\geq 0.4263.
\]
Equivalently, if $f, g \in A^2(\D)$ and $\abs{f(z)} \leq \abs{g(z)}$ for $0.4263
< \abs{z} < 1$, then $\norm{f}_{A^2} \leq \norm{g}_{A^2}$.
\end{theorem}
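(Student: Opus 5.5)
The plan is to reduce Theorem~\ref{thm:main} to a moment criterion and then verify that criterion for an explicit measure with $c=0.4263$.

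\emph{Reduction to power series.} Write $f=\sum a_k z^k$ and $g=\sum b_k z^k$. Then
\[
\norm{f}_{A^2}^2=\sum_k \frac{\abs{a_k}^2}{k+1},
\]
and the circle means satisfy
\[
M_f(r^2):=\frac{1}{2\pi}\int_0^{2\pi}\abs{f(re^{i\theta})}^2\,d\theta=\sum_k \abs{a_k}^2 r^{2k}.
\]
Integrating $\abs{f}^2\le\abs{g}^2$ over circles of radius $r>c$ gives $M_f(t)\le M_g(t)$ for every $t\in[c^2,1)$. This is weaker than what is needed, since the annulus alone does not control $\norm{f}$. Following Wang, I would use the annular coefficient estimate. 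Its output is a pointwise inequality $P(t)\le Q(t)$ on $[c^2,1]$, where $P(t)=\sum p_k t^k$ and $Q(t)=\sum q_k t^k$ have nonnegative coefficients. The aim is to conclude
\[
\sum \frac{p_k}{k+1}\le \sum \frac{q_k}{k+1},
\]
which is equivalent to $\norm{f}\le\norm{g}$. I expect the structure to be roughly as follows: $P$ collects the coefficients of $f$ together with the coefficients of $g$ weighted by a factor that grows in $k$, and $Q$ collects those of $g$.

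\emph{Duality step.} Let $\mu$ be a probability measure on $[c^2,1]$. Integrating the pointwise inequality against $\mu$ gives
\[
\sum_k p_k m_k\le \sum_k q_k m_k, \qquad m_k=\int t^k\,d\mu .
\]
Suppose the ordinary moments satisfy $m_k\ge 1/(k+1)$ on the side multiplying $P$. Suppose also that the weighted moments $\int w(t)t^k\,d\mu$ (with the weight coming from Wang's estimate) satisfy $\le 1/(k+1)$ on the side multiplying $Q$. Then the nonnegativity of the coefficients lets us chain
\[
\sum \frac{p_k}{k+1}\le \sum p_k m_k\le \sum q_k \tilde m_k\le \sum \frac{q_k}{k+1}.
\]
This gives the moment-duality criterion: $c$ is admissible whenever such a $\mu$ exists. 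The case $k=0$ is handled by exact equality, since $\mu$ is a probability measure.

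\emph{Construction and certification.} For $c=0.4263$, I would choose $\mu$ as a finite sum of atoms at rational points of $[c^2,1)$ plus a multiple of Lebesgue measure on a terminal interval $[s,1]$. The Lebesgue piece is included because its moments are $(1-s^{k+1})/(k+1)$, which reproduces the $1/(k+1)$ tail behaviour exactly as $k\to\infty$. The atoms would be found numerically by a linear program: discretize the candidate atom locations, impose the two families of inequalities for $k\le K$, and then rationalize the solution. Each inequality is an explicit function of $k$, so finitely many $k\le K$ are checked in ball arithmetic. For $k>K$, an asymptotic argument shows that the atoms contribute $O(\rho^k)$ with $\rho<1$, while the Lebesgue part dominates with a margin of order $1/k$; explicit constants then close the tail.

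\emph{Main obstacle.} I expect the hardest step to be making the inequalities hold uniformly in $k$ with certified margins. Near $k$ where the atoms and the Lebesgue part trade dominance, the slack is tiny, and the rational atoms must be tuned so that both moment families keep their correct sides. My first attempt would be to maximize the minimum slack in the linear program and then to bound the tail analytically using monotonicity in $k$ of the ratio to $1/(k+1)$.
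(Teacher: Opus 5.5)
Your architecture matches the paper's: Wang's estimate, a moment-duality criterion, an atoms-plus-Lebesgue measure, ball arithmetic for finitely many $k$, and an analytic tail. Two steps, however, do not work as written.

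First, you never pin down the pointwise inequality, and the form you guess does not fit your own chain. Integrating an unweighted $P(t)\le Q(t)$ puts the ordinary moments on the $Q$ side, where $m_k\ge 1/(k+1)$ points the wrong way. A $k$-dependent factor on the $P$ side is also not what Wang's estimate gives. The chain needs $p_k-q_k=\abs{a_k}^2-\abs{b_k}^2$ and a weight that depends on $t$, is at most $1$, and multiplies the $Q$-series. Concretely, let $x_k,y_k$ be the positive and negative parts of $\abs{a_k}^2-\abs{b_k}^2$, and set $X(t)=\sum_{k\ge0}x_kt^k$ and $Y(t)=\sum_{k\ge1}y_kt^k$. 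Then $M_g(r)^2-M_f(r)^2=y_0+Y(r^2)-X(r^2)$. Hence Wang's bound $Y(r^2)\le G_c(r)\bigl(M_g(r)^2-M_f(r)^2\bigr)$ rearranges to $X(t)\le y_0+\beta_c(\sqrt t)\,Y(t)$ with $\beta_c=1-1/G_c$. That $\beta_c<1$ inside the annulus is what makes the moment problem feasible at all. With weight $\equiv1$ the two families would force $\int t^k\,d\mu=1/(k+1)$ for every $k$, so $\mu$ would be Lebesgue measure on $[0,1]$, which is not supported in $[c^2,1]$.

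Second, your tail argument fails for the first family. The multiple of Lebesgue measure must be exactly $1$: multiply both families by $k+1$ and let $k\to\infty$, noting that atoms below $1$ drop out and $\beta_c(\sqrt t)\to1$ as $t\to1$. So on your terminal interval $[s,1]$ (the paper's $[R,1]$) it contributes $(1-s^{k+1})/(k+1)$. That is a \emph{deficit} of $s^{k+1}/(k+1)$ relative to $1/(k+1)$, not a margin of order $1/k$. The first family is therefore exactly $\sum_jw_jt_j^k\ge s^{k+1}/(k+1)$. This fails for every large $k$ if all atoms lie below $s$ (for instance, if your LP grid stays strictly below the Lebesgue interval), however many $k\le K$ you certify. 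The missing idea is an atom at the left endpoint of the Lebesgue piece. The paper takes $t_8=R$ and uses $w_8(k+1)>R$ for $k\ge2184$, so $w_8R^k$ alone beats $R^{k+1}/(k+1)$. For the second family the margin is $R^{k+1}/(k+1)+\int_R^1t^k/G_c(\sqrt t)\,dt$ minus the weighted atoms. Since $1/G_c(\sqrt t)\to0$ as $t\to1$, the integral is $o(1/k)$ rather than of order $1/k$, but it still dominates the atoms. The paper makes this explicit with $1/G_c(\sqrt t)>0.0000225$ on $[0.998,0.999]$, which gives a term proportional to $0.998^k$ against atomic terms of size at most $0.99^k$.
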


To indicate the mechanism, write
\[
    f(z)=\sum_{k\geq0}a_kz^k,
    \qquad
    g(z)=\sum_{k\geq0}b_kz^k.
\]
We split the sequence $\abs{a_k}^2-\abs{b_k}^2$ into its positive and negative
parts.  Wang's argument gives, on every circle in the annulus, an
inequality of the form
\[
    X(t)\leq y_0+\beta_c(\sqrt t)Y(t),
    \qquad\text{when $c^2 \leq t < 1$,}
\]
where $X$ and $Y$ have non-negative coefficients and $0\leq\beta_c<1$
in the interior of the annulus.
Instead of estimating the coefficients one at a time, we integrate this
inequality against a probability measure whose moments lie on opposite
sides of the Bergman moments $1/(k+1)$.  This reduces the proof to a
positive moment problem; the precise criterion is Lemma
\ref{lem:dual-measure}.

For the application, we use a measure of the form
\[
    \mu=\sum_{j=1}^{8}w_j\delta_{t_j}
            +\boldsymbol{1}_{[R,1]}(t)\,dt.
\]
Numerical optimization was used only to discover candidate nodes and
weights.  For the proof, they are replaced by fixed terminating
decimals and hence by exact rational numbers.  At $c=0.4263$, a short
stand-alone Arb program certifies the required inequalities for this
explicit measure using ball arithmetic.  The optimizer and all floating-point exploratory
calculations play no role in the proof.

\subsection*{Tool and computational resource disclosure}
OpenAI Codex (GPT-5.6 Sol, accessed July 2026) was used as an interactive
research assistant for mathematical brainstorming, numerical exploration of
the moment problem, and assistance in developing the interval-arithmetic
verification scripts.  The author reviewed all generated material,
independently checked the arguments and computations, and takes full
responsibility for the content of the paper.

\section{Wang's annular estimate}

We record the part of Wang's argument that will be used below.  Fix
$0 < c < 1$ and $c < r < 1$.  Define
\begin{equation}\label{eq:K-def}
    \begin{split}
    K_c(r) &= 2r\left(1+\frac{c}{r^2}\right)
        \prod_{n=1}^{\infty}
        \frac{(1+r^2c^{2n-1})(1+r^{-2}c^{2n+1})(1+c^{2n})^2}
            {(1+r^2c^{2n-2})(1+r^{-2}c^{2n})(1+c^{2n-1})^2},\\
    G_c(r) &= \frac12\left(
        1+\sqrt{\frac{1+3K_c(r)^2}{1-K_c(r)^2}}
        \right),
    \qquad
    \beta_c(r)=1-\frac1{G_c(r)}.
    \end{split}
\end{equation}
At the endpoints we use the limiting convention $\beta_c(c)=
\beta_c(1)=1$. Let
\[
    M_h(r)^2=\frac1{2\pi}\int_0^{2\pi} \abs{h(re^{i\theta})}^2\,d\theta.
\]
The following is contained in equations (8)--(9) of~\cite{Wang2025}.

\begin{proposition}[Wang]\label{prop:wang}
Suppose that $f(z)=\sum a_kz^k$ and $g(z)=\sum b_kz^k$ are holomorphic in
$\D$ and that $\abs{f} \leq \abs{g}$ in $c < \abs{z} < 1$.  Put
\[
    Y(r^2)=\sum_{\substack{k\geq1\\ \abs{a_k}<\abs{b_k}}}
       (\abs{b_k}^2-\abs{a_k}^2)r^{2k}.
\]
Then
\begin{equation}\label{eq:wang-estimate}
    Y(r^2)\leq G_c(r)\bigl(M_g(r)^2-M_f(r)^2\bigr),
    \qquad\text{for $c <r<1$}.
\end{equation}
\end{proposition}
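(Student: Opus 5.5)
The plan is to reproduce Wang's argument in three stages: reduce to a bounded holomorphic quotient on the annulus, apply a two-point Schwarz--Pick inequality on the annulus, and average over the circle $\abs{z}=r$.

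\emph{Stage 1: the quotient.} Put $A_c=\{c<\abs{z}<1\}$. On $A_c$ the hypothesis $\abs{f}\leq\abs{g}$ shows that every zero of $g$ is a zero of $f$ of at least the same order. Hence $\varphi=f/g$ extends holomorphically to $A_c$ with $\abs{\varphi}\leq 1$ there; the case $\varphi$ a unimodular constant is trivial. I would write
\[
    M_g(r)^2-M_f(r)^2=\frac1{2\pi}\int_0^{2\pi}\bigl(\abs{g}^2-\abs{f}^2\bigr)(re^{i\theta})\,d\theta
    =\sum_{k\geq0}(\abs{b_k}^2-\abs{a_k}^2)r^{2k}.
\]
So the right-hand side of \eqref{eq:wang-estimate} is $G_c(r)\bigl(y_0+Y(r^2)-X(r^2)\bigr)$, where $X$ collects the positive parts $\abs{a_k}^2-\abs{b_k}^2$ and $y_0$ is the $k=0$ term. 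Since $G_c(r)=1/(1-\beta_c(r))$, the claim is equivalent to
\[
    X(r^2)\leq y_0+\beta_c(r)\,Y(r^2),
\]
which is the form stated in the introduction.

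\emph{Stage 2: two-point Schwarz--Pick.} I would identify $K_c(r)$ with the Möbius pseudodistance of $A_c$ between the two antipodal points $re^{i\theta}$ and $-re^{i\theta}$ on the circle $\abs{z}=r$. By rotation invariance this quantity does not depend on $\theta$. The infinite product in \eqref{eq:K-def} is the standard theta-function (Schottky--Klein) expression for the extremal quotient on an annulus, and checking this identification is a computation I would carry out by comparing with the Green's function of $A_c$.

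Schwarz--Pick for holomorphic self-maps of $A_c$ into $\D$ then gives, at $z=re^{i\theta}$,
\[
    \abs{\varphi(z)-\varphi(-z)}\leq K_c(r)\,\abs{1-\overline{\varphi(-z)}\varphi(z)}.
\]
Multiplying through by $\abs{g(z)g(-z)}$ turns this into a pointwise inequality in $f(\pm z)$ and $g(\pm z)$. The useful combinations are $f(z)\pm f(-z)$ and $g(z)\pm g(-z)$: these isolate the even and odd coefficients, and their squared moduli integrate to weighted sums of $\abs{a_k}^2$ and $\abs{b_k}^2$.

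\emph{Stage 3: averaging and optimizing.} I would integrate the squared pointwise inequality over $\theta$ and expand with Parseval. An AM--GM splitting with a free parameter $\lambda$ turns the cross terms into a bound of the form
\[
    \sum_k\bigl(\abs{a_k}^2-\abs{b_k}^2\bigr)r^{2k}\leq \Phi(\lambda,K_c(r))\cdots .
\]
The aim is to show that the coefficients $k$ with $\abs{a_k}>\abs{b_k}$ are controlled by $y_0$ plus a fraction of those with $\abs{a_k}<\abs{b_k}$. Optimizing in $\lambda$ should produce the quadratic in $G$ whose root is
\[
    G_c(r)=\tfrac12\Bigl(1+\sqrt{(1+3K^2)/(1-K^2)}\Bigr),
\]
with $K=K_c(r)$. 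The boundary convention $\beta_c(c)=\beta_c(1)=1$ is consistent with this, since $K_c(r)\to1$ at the endpoints.

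\emph{Main obstacle.} The hard step is the bookkeeping in Stage 3: converting a pointwise two-point inequality, after averaging, into a coefficientwise statement that separates the sets $\{\abs{a_k}>\abs{b_k}\}$ and $\{\abs{a_k}<\abs{b_k}\}$, with exactly the constant $G_c(r)$. I would try first to apply the inequality to the pair $(f,g)$ replaced by $(\varepsilon f,g)$ and average over a unimodular $\varepsilon$, to kill unwanted cross terms. If that does not give the sharp constant, I would fall back on following equations (8)--(9) of~\cite{Wang2025} line by line, since the proposition is only being recorded from there.
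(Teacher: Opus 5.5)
\textbf{There is a genuine gap: Stage 3, and it cannot be closed with the inputs you collect.} Stage 3 is where the proof has to happen, and there you only hope that the constant will come out. At radius $r$, Stage 3 uses only two pieces of information: $\abs{\varphi}\le 1$ on $\abs{z}=r$, and the antipodal Schwarz--Pick inequality. For even $f,g$ you have $\varphi(-z)=\varphi(z)$, so the antipodal inequality says nothing. Take $f\equiv r^2$ and $g(z)=z^2$. On $\abs{z}=r$ you have $\abs{\varphi}=1$, and both sides of your antipodal inequality vanish. Yet $Y(r^2)=r^4>0=G_c(r)\bigl(M_g(r)^2-M_f(r)^2\bigr)$. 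So no Parseval/AM--GM bookkeeping built on these inputs can give \eqref{eq:wang-estimate}. The annulus must enter through information about non-antipodal pairs; for example, Schwarz--Pick for $z$ and $iz$ already excludes this pair. There is also a structural obstruction. Squaring and averaging $\abs{f(z)g(-z)-f(-z)g(z)}$ gives, by Parseval, squared convolution sums $\sum_{j+l=n}a_jb_l\bigl((-1)^l-(-1)^j\bigr)$. Such quantities cannot be sorted according to the index sets $\{\abs{a_k}<\abs{b_k}\}$ and $\{\abs{a_k}>\abs{b_k}\}$.

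\textbf{The missing idea.} This is the coefficient step the paper records from \cite{Wang2025}. The sign pattern of $\abs{b_k}^2-\abs{a_k}^2$ is invariant under $(f,g)\mapsto(f-\alpha g,\,g-\alpha f)$: for real $\abs{\alpha}<1$,
\[
 (1-\alpha^2)\bigl(\abs{b_k}^2-\abs{a_k}^2\bigr)=\abs{b_k-\alpha a_k}^2-\abs{a_k-\alpha b_k}^2 .
\]
On the indices in $Y$, drop $-\abs{a_k-\alpha b_k}^2$; for all other $k$, add the nonnegative terms $\abs{b_k-\alpha a_k}^2r^{2k}$. This gives $Y(r^2)\le(1-\alpha^2)^{-1}M_{g-\alpha f}(r)^2$ with no further coefficient bookkeeping. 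What remains is a pointwise statement. It suffices that $\abs{1-\alpha\varphi}^2\le(1-\alpha^2)G_c(r)(1-\abs{\varphi}^2)$ on $\abs{z}=r$. Equivalently, $\varphi(\{\abs{z}=r\})$ must lie in the pseudohyperbolic disk of radius $\sqrt{\beta_c(r)}$ about $\alpha$; rotate $f$ by a unimodular constant to make the center real. This containment concerns the whole image of the circle, and it is where the annular pseudodistance enters.

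Here is one way to see the exact constant. Suppose $K_c(r)=\tanh(D/2)$ bounds the pseudodistance between any two points of the circle. Center at the hyperbolic midpoint of two image points at maximal distance. The hyperbolic median formula then puts every image point within hyperbolic distance $R$ of the center, where $\sinh R\le 2\sinh(D/2)$. The radius with $\sinh R=2\sinh(D/2)$ satisfies $\cosh^2(R/2)=\tfrac12\bigl(1+\sqrt{(1+3K^2)/(1-K^2)}\bigr)=G_c(r)$. So $G_c$ does not arise from optimizing an AM--GM parameter.
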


For completeness, the coefficient step in the proof is as follows.
Let $\omega=f/g$ in the annulus.  For a suitable real number $\alpha=\alpha(r)$,
the identity
\[
 |b_k|^2-|a_k|^2
 =\frac{|b_k-\alpha a_k|^2-|a_k-\alpha b_k|^2}{1-\alpha a^2}
\]
gives
\[
Y(r^2)
 \leq\frac1{2\pi}\int_0^{2\pi}
       \frac{|g(re^{i\theta})-\alpha f(re^{i\theta})|^2}{1-\alpha a^2}
       \,d\theta.
\]
The annular pseudodistance estimate for $\omega$ bounds the last
quantity by the right-hand side of \eqref{eq:wang-estimate}; the
resulting constant is exactly $G_c(r)$ in \eqref{eq:K-def}.

\section{A dual moment lemma}

The following elementary lemma is the main structural observation.

\begin{lemma}\label{lem:dual-measure}
Fix $0 < c < 1$.  Suppose that there is a probability measure $\mu$ on
$[c^2,1]$, with $\mu(\{1\})=0$, such that
\begin{align}
 \int_{c^2}^1t^k\,d\mu(t) &\geq \frac{1}{k+1},
       && k=0,1,2,\ldots,\label{eq:upper-moments}\\
 \int_{c^2}^1\beta_c(\sqrt t)t^k\,d\mu(t) &\leq \frac{1}{k+1},
       && k=1,2,\ldots.\label{eq:lower-moments}
\end{align}
Then $c$ is admissible in Korenblum's maximum principle.
\end{lemma}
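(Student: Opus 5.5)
Set $d_k=\abs{a_k}^2-\abs{b_k}^2$ and split it into positive and negative parts, $x_k=d_k^+$ and $y_k=d_k^-$. Then put $X(t)=\sum_{k\ge1}x_kt^k$ and $Y(t)=\sum_{k\ge1}y_kt^k$; this $Y$ agrees with the one in Proposition~\ref{prop:wang}. Since
\[
\norm{f}_{A^2}^2-\norm{g}_{A^2}^2=\sum_{k\ge0}\frac{d_k}{k+1},
\]
the goal is to show $\sum_k x_k/(k+1)\le y_0+\sum_k y_k/(k+1)$, where $d_0=x_0-y_0$. First I dispose of the constant term. If $a_0\ne0$ and $\abs{a_0}>\abs{b_0}$, nothing special is needed: $x_0$ just contributes to the left side, and we may absorb it into $X$ by letting $X$ include $k=0$. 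Concretely I write $M_f(r)^2-M_g(r)^2=X(r^2)-Y(r^2)-y_0$ with $X(t)=\sum_{k\ge0}x_kt^k$.

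Proposition~\ref{prop:wang} gives $Y(t)\le G_c(\sqrt t)\bigl(y_0+Y(t)-X(t)\bigr)$ for $c^2<t<1$. Dividing by $G_c$ and rearranging, this becomes
\[
X(t)\le y_0+\beta_c(\sqrt t)\,Y(t),\qquad c^2<t<1.
\]
The inequality also holds at $t=c^2$ with the convention $\beta_c(c)=1$. There it is the limit as $t\downarrow c^2$ of the interior inequality, since $\beta_c\to1$ and $X,Y$ are continuous on $[0,1)$ (the series converge because $f,g$ are holomorphic in $\D$). The point $t=1$ is excluded by $\mu(\{1\})=0$.

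Next I integrate against $\mu$ over $[c^2,1)$. The integrands have nonnegative coefficients, so Tonelli lets me exchange sum and integral. Using \eqref{eq:upper-moments} for every $k\ge0$,
\[
\sum_k\frac{x_k}{k+1}\le\sum_k x_k\int t^k\,d\mu=\int X\,d\mu .
\]
Since $\mu$ is a probability measure and by \eqref{eq:lower-moments} for $k\ge1$,
\[
\int X\,d\mu\le y_0+\sum_{k\ge1}y_k\int\beta_c(\sqrt t)t^k\,d\mu\le y_0+\sum_{k\ge1}\frac{y_k}{k+1}.
\]
Chaining these gives $\norm{f}^2\le\norm{g}^2$, which is what we want. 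The argument is valid even if the norms are infinite, since all manipulations are of nonnegative series; in particular, $\norm g<\infty$ forces $\norm f<\infty$.

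I expect the main obstacle to be bookkeeping rather than depth. The key point is matching Wang's $Y$, which starts at $k\ge1$, with the split, so that $y_0$ appears with coefficient $1$ rather than $\beta_c$. This is exactly why \eqref{eq:lower-moments} is only required for $k\ge1$, while \eqref{eq:upper-moments} includes $k=0$. I would also check the boundary behaviour: the inequality at $t=c^2$ and the measurability of $\beta_c(\sqrt t)$, which is continuous on $(c,1)$, so that the integrals make sense.
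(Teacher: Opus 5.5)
Your proposal is correct and is essentially the paper's proof: you split $d_k$ into positive and negative parts, rewrite Wang's estimate as $X(t)\le y_0+\beta_c(\sqrt t)\,Y(t)$ on $[c^2,1)$, and integrate against $\mu$. You use the ordinary moments for $X$ (including $k=0$) and the weighted moments for $Y$ (only $k\ge1$, since $y_0$ enters with coefficient $1$). The only blemish is presentational: your initial definition of $X$ starting at $k\ge1$ and the case discussion about $a_0$ are unnecessary, since defining $X(t)=\sum_{k\ge0}x_kt^k$ from the outset, as you eventually do, handles the constant term uniformly.
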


\begin{proof}
Write $d_k=\abs{a_k}^2-\abs{b_k}^2$ and set
\[
 x_k=(d_k)_+ \quad (k\geq0),
 \qquad
 y_k=(-d_k)_+ \quad (k\geq0).
\]
Define
\[
 X(t)=\sum_{k\geq0}x_kt^k,
 \qquad
 Y(t)=\sum_{k\geq1}y_kt^k.
\]
Since
\[
 M_g(r)^2-M_f(r)^2=y_0+Y(r^2)-X(r^2),
\]
Proposition \ref{prop:wang} implies
\[
 Y(r^2)\leq G_c(r)\bigl(y_0+Y(r^2)-X(r^2)\bigr).
\]
Equivalently,
\begin{equation}\label{eq:cone-inequality}
 X(t)\leq y_0+\beta_c(\sqrt t)Y(t),
 \qquad c^2<t<1.
\end{equation}
The same inequality also holds at $t=c^2$ by continuity and the convention
$\beta_c(c)=1$.  Integrating \eqref{eq:cone-inequality} against $\mu$ and using
\eqref{eq:upper-moments}--\eqref{eq:lower-moments}, we obtain
\[
\begin{split}
 \sum_{k\geq0}\frac{x_k}{k+1}
 &\leq \int X(t)\,d\mu(t)\\
 &\leq y_0+\int\beta_c(\sqrt t)Y(t)\,d\mu(t)
 \leq y_0+\sum_{k\geq1}\frac{y_k}{k+1}.
\end{split}
\]
But
\[
    \norm{f}_{A^2}^2-\norm{g}_{A^2}^2 = \sum_{k\geq0} \frac{d_k}{k+1},
\]
so the preceding inequality proves the assertion.
\end{proof}

\section{An explicit certificate measure}

Fix the numbers
\begin{equation}\label{eq:c-and-R}
 c=\frac{4263}{10000}=0.4263,
 \quad s=c^2=0.18173169,
 \quad R=\frac{99}{100}=0.99.
\end{equation}
We use a measure of the form
\begin{equation}\label{eq:mu-explicit}
 d\mu(t)=\sum_{j=1}^{8}w_j\,d\delta_{t_j}(t)
          +\boldsymbol{1}_{[R,1]}(t)\,dt,
\end{equation}
where the nodes and weights are listed in Table \ref{tab:measure}.
Every decimal in the table is interpreted as an exact terminating
decimal, not as a floating-point approximation.  Thus the measure
below is fixed independently of the procedure by which it was found.

\begin{table}[ht]
\centering
\caption{The atomic part of the measure \eqref{eq:mu-explicit}.}
\label{tab:measure}
\begin{tabular}{@{}c@{\qquad}c@{\qquad}c@{}}
\toprule
$j$ & $t_j$ & $w_j$ \\
\midrule
1 & $0.1817316900000000$ & $0.3924240372921149$ \\
2 & $0.5786106116843371$ & $0.3349420206616465$ \\
3 & $0.8163605367760075$ & $0.1555309603886983$ \\
4 & $0.9200761835179418$ & $0.0649137961077427$ \\
5 & $0.9633238309788337$ & $0.0271186307684793$ \\
6 & $0.9813028389745606$ & $0.0109963471767101$ \\
7 & $0.9881242650409469$ & $0.0036210554863853$ \\
8 & $0.9900000000000000$ & $0.0004531521182229$ \\
\bottomrule
\end{tabular}
\end{table}

Notice that $t_1=s$, $t_8=R$, and an exact decimal calculation gives
\[
 \sum_{j=1}^{8}w_j=0.9900000000000000=R.
\]
Thus the atomic part has mass $R$, the absolutely continuous part has
mass $1-R$, and $\mu$ is a probability measure supported on $[s,1]$
with no atom at $1$.

For this class of measures the two conditions in Lemma
\ref{lem:dual-measure} take a particularly useful form.  Define
\begin{align}
 A_k&=\sum_{j=1}^{8}w_jt_j^k-
             \frac{R^{k+1}}{k+1},                         \label{eq:A-k}\\
 B_k&=\int_R^1\frac{t^k}{G_c(\sqrt t)}\,dt
       +\frac{R^{k+1}}{k+1}
       -\sum_{j=1}^{8}w_j\beta_c(\sqrt{t_j})t_j^k.
                                                               \label{eq:B-k}
\end{align}
Indeed, direct integration of the Lebesgue part of \eqref{eq:mu-explicit}
shows that \eqref{eq:upper-moments} is equivalent to $A_k\geq0$.
Using $\beta_c=1-1/G_c$ similarly shows that
\eqref{eq:lower-moments} is equivalent to $B_k\geq0$.  It therefore
remains to certify
\begin{equation}\label{eq:two-families}
 A_k\geq0\quad(k\geq0),
 \qquad B_k\geq0\quad(k\geq1).
\end{equation}

\section{Rigorous verification}

All computations in this section were performed with Arb ball
arithmetic through the \texttt{python-flint} interface, using $65$
decimal digits of working precision.  The complete verification script
is included with the source of this paper as
\texttt{verify\_lower\_bound\_04263.py}.  A versioned archive containing
the script and reproduction instructions is available at
\url{https://doi.org/10.5281/zenodo.21446369}.  This script does not invoke an
optimizer or read floating-point data: the constant, cutoff, nodes, and
weights are encoded as exact rational numbers.  Its only task is to certify
the explicit inequalities stated below.

\subsection{The annular product and the quadrature}

Let $K_{c,N}$ denote the product in \eqref{eq:K-def} truncated after
$N$ factors.  For $t=r^2\geq L$, applying
$\abs{\log(1+x)} \leq x$ to the omitted factors gives
\begin{equation}\label{eq:product-tail}
 \abs[\big]{\log K_c(\sqrt t)-\log K_{c,N}(\sqrt t)}
 \leq
 \left(1+3c+2c^2+\frac{c^3+c^2}{L}\right)
 \frac{c^{2N}}{1-c^2}.
\end{equation}
Differentiating the logarithm term by term gives the corresponding
tail estimate
\begin{equation}\label{eq:derivative-tail}
 \abs[\Big]{\frac{d}{dt}\log K_c(\sqrt t)
       -\frac{d}{dt}\log K_{c,N}(\sqrt t)}
 \leq
 \left(1+c+\frac{c^3+c^2}{L^2}\right)
 \frac{c^{2N}}{1-c^2}.
\end{equation}
We use $N=28$ throughout.

Interval evaluation of the differentiated finite product, enlarged by
the error in \eqref{eq:derivative-tail}, proves that
$K_c(\sqrt t)$ is strictly increasing on $[R,0.99999]$.  The interval
was treated in three blocks, each divided into $50000$ equal
subintervals.  The certified lower bounds for its
logarithmic derivative were respectively
\begin{equation}\label{eq:derivative-bounds}
\begin{array}{c|c}
{}[R,0.999]          &3.6694461888\cdot10^{-7}\\
{}[0.999,0.9999]     &3.6647630565\cdot10^{-8}\\
{}[0.9999,0.99999]   &3.6642942322\cdot10^{-9}.
\end{array}
\end{equation}
Consequently, $1/G_c(\sqrt t)$ is decreasing on these intervals.

To bound the integral in \eqref{eq:B-k}, divide an interval
$[a,b]\subset[R,0.99999]$ into subintervals $[u_j,u_{j+1}]$.  On each
subinterval we use
\begin{equation}\label{eq:lower-quadrature}
 \int_{u_j}^{u_{j+1}}\frac{t^k}{G_c(\sqrt t)}\,dt
 \geq \frac{1}{G_c(\sqrt{u_{j+1}})}
       \frac{u_{j+1}^{k+1}-u_j^{k+1}}{k+1}.
\end{equation}
We used $30000$, $5000$, and $500$ pieces on the three intervals in
\eqref{eq:derivative-bounds}, respectively.  The remaining positive
integral over $[0.99999,1]$ was discarded.

\subsection{The first moment family}

Direct ball evaluation proves $A_k>0$ for $1\leq k\leq2183$.
The smallest relative margin occurs at $k=23$ and satisfies
\begin{equation}\label{eq:first-margin}
 \frac{A_{23}}{R^{24}/24}>6.6452117503\cdot10^{-6},
 \qquad A_{23}>2.1754156722\cdot10^{-7}.
\end{equation}
For $k\geq2184$, the atom at $t_8=R$ alone is sufficient.  Indeed,
\[
 w_8(k+1)\geq2185w_8
   =0.9901373783170365>R,
\]
and hence
\[
 \sum_{j=1}^{8}w_jt_j^k\geq w_8R^k
        >\frac{R^{k+1}}{k+1}.
\]
The case $k=0$ follows from the fact that $\mu$ is a probability
measure.  This proves the first family in \eqref{eq:two-families}.

\subsection{The second moment family}

At the first node we use $\beta_c(c)=1$.  Rigorous upper enclosures at
the remaining seven nodes are
\begin{align*}
 \beta_c(\sqrt{t_2})&<0.989685929536115,\\
 \beta_c(\sqrt{t_3})&<0.995533783756722,\\
 \beta_c(\sqrt{t_4})&<0.998127951092143,\\
 \beta_c(\sqrt{t_5})&<0.999157174953505,\\
 \beta_c(\sqrt{t_6})&<0.999573932287887,\\
 \beta_c(\sqrt{t_7})&<0.999730257446456,\\
 \beta_c(\sqrt{t_8})&<0.999773067105237.
\end{align*}
Combining these enclosures with the lower quadrature
\eqref{eq:lower-quadrature} proves $B_k>0$ for $1\leq k\leq1299$.
The smallest relative margin occurs at $k=522$:
\begin{equation}\label{eq:second-margin}
 \frac{B_{522}}{R^{523}/523}>6.0214605308\cdot10^{-6},
 \qquad B_{522}>6.0035333797\cdot10^{-11}.
\end{equation}

It remains to treat $k\geq1300$.  The same interval calculation gives
\[
 \frac1{G_c(\sqrt t)}>0.0000225,
 \qquad \text{for $0.998 \leq t \leq 0.999$}.
\]
Since $\beta_c\leq1$, we may discard the positive term
$R^{k+1}/(k+1)$ in \eqref{eq:B-k} and obtain
\begin{equation}\label{eq:second-tail}
 B_k\geq
 0.0000225(0.999-0.998)(0.998)^k
 -\sum_{j=1}^{8}w_jt_j^k.
\end{equation}
At $k=1300$, the certified comparison is
\[
 1.666810362143132\cdot10^{-9}
  >1.611068196498003\cdot10^{-9}.
\]
Thereafter the positive term in \eqref{eq:second-tail} is multiplied by
$0.998$, whereas every negative term is multiplied by at most
$R=0.99$.  Thus $B_k>0$ for every $k\geq1300$.  This completes the
proof of \eqref{eq:two-families}; Lemma \ref{lem:dual-measure} now
proves Theorem \ref{thm:main}.

\begin{remark}
The measure in Table \ref{tab:measure} was discovered by solving a
linear program on a dense grid below $R$, allowing the program to choose
its own support, and then continuously refining the resulting eight
nodes.  This exploratory optimization is not part of the certificate
and is not called by the verification script.  As $R$ tends to $1$,
additional nodes accumulate near $1$; numerical experiments place the
apparent limiting value of this discrete--continuous construction near
$c=0.42639$.  Neither observation is used in the proof.
\end{remark}


\begin{thebibliography}{99}

\bibitem{Hayman1999}
W.~K. Hayman,
\emph{On a conjecture of Korenblum},
Analysis (Munich) \textbf{19} (1999), no.~2, 195--205.

\bibitem{Hinkkanen1999}
A. Hinkkanen,
\emph{On a maximum principle in Bergman space},
J. Anal. Math. \textbf{79} (1999), 335--344.

\bibitem{Korenblum1991}
B. Korenblum,
\emph{A maximum principle for the Bergman space},
Publ. Mat. \textbf{35} (1991), no.~2, 479--486.

\bibitem{Schuster2006}
A. Schuster,
\emph{The maximum principle for the Bergman space and the M\"obius
pseudodistance for the annulus},
Proc. Amer. Math. Soc. \textbf{134} (2006), no.~12, 3525--3530.

\bibitem{Wang2006}
C. Wang,
\emph{On Korenblum's maximum principle},
Proc. Amer. Math. Soc. \textbf{134} (2006), no.~7, 2061--2066.

\bibitem{Wang2008}
C. Wang,
\emph{Domination in the Bergman space and Korenblum's constant},
Integral Equations Operator Theory \textbf{61} (2008), no.~3, 423--432.

\bibitem{Wang2011}
C. Wang,
\emph{Some results on Korenblum's maximum principle},
J. Math. Anal. Appl. \textbf{373} (2011), no.~2, 393--398.

\bibitem{Wang2025}
C. Wang,
\emph{A lower bound on Korenblum's constant},
Proc. Amer. Math. Soc. \textbf{153} (2025), no.~3, 1209--1214.

\end{thebibliography}
\end{document}